%% file: arxiv.tex
\documentclass[11pt]{article}
\usepackage{amsmath}
\usepackage{amssymb}
\usepackage{latexsym}
\usepackage{amsthm}
\usepackage{mathrsfs} 
\usepackage{wasysym}
\usepackage{fancyhdr}
\usepackage{xcolor}
\usepackage{amsfonts}
\usepackage{amssymb}
\usepackage{epsfig}
\usepackage{epstopdf}
\usepackage[normalem]{ulem}
\usepackage[T1]{fontenc}
\usepackage{eucal}
\usepackage{tikz}
\usepackage[utf8]{inputenc}
\usepackage{hyperref}
\usepackage{algorithm}
\usepackage{algpseudocode}
\usepackage{subcaption}

\newtheorem{Theorem}{Theorem}[section]

\newtheorem{Lemma}[Theorem]{Lemma}

\title{A note on the injective edge coloring of graphs with bounded maximum degree}
\author{Lin Tian\thanks{Department of Mathematical Sciences, Middle Tennessee State University, Murfreesboro, TN 37132, USA. Email: \texttt{lt5b@mtmail.mtsu.edu}}
\and Runze Wang\thanks{Corresponding author. Department of Mathematics and Computer Science, Augustana College, Rock Island, IL 61201, USA. Email: \texttt{runze.w@hotmail.com}}}
\date{}

\begin{document}
	\sloppy
	\maketitle
	\begin{abstract}
		Let $G$ be a graph. Under an \emph{injective edge coloring} of $G$, any two edges at distance $2$ or belonging to a common triangle receive distinct colors. The \emph{injective chromatic index} of $G$, denoted by $\chi'_{inj}(G)$, is the smallest integer $k$ such that $G$ admits an injective edge coloring with $k$ colors. Let $\Delta$ be the maximum degree of $G$. Ferdjallah et al.~proved that $\chi'_{inj}(G) \le 2(\Delta-1)^2$. In this paper, we improve this bound by proving that $\chi'_{inj}(G) \le 2(\Delta - 1)^2 -\Delta + 3$ if $\Delta \ge 4$.
	\end{abstract}
	\indent {\small {\bf Keywords:} injective edge coloring, injective chromatic index, maximum degree.}
	
	\section{Introduction}
	All graphs considered in this paper are finite and simple, without loops or multiple edges. Let $G$ be a graph with vertex set $V(G)$ and edge set $E(G)$. Denote by $\Delta(G)$ (or simply $\Delta$) the maximum degree of $G$, denote by $\delta(G)$ the minimum degree of $G$, and denote by $\omega(G)$ the clique number of $G$. For $v\in V(G)$, denote by $\deg_G(v)$ the degree of $v$, and denote by $N_G(v)$ the neighborhood of $v$. For $X \subseteq V(G)$, let $G[X]$ be the subgraph induced by $X$, and let $e_G(X) = |E(G[X])|$.  For two edges $e_1, e_2\in E(G)$, the distance between $e_1$ and $e_2$ is the distance between the corresponding vertices in the line graph of $G$. (Note that the distance between a pair of incident edges is~$1$.) For discrete intervals, let $[n]=\{1,2,\dots,n\}$ for any positive integer $n$.
	
	The notion of injective edge coloring was introduced by Cardoso et al.~\cite{cardoso-2015}. For a graph $G$, an \emph{injective edge coloring} of $G$ is an edge coloring such that any two edges at distance $2$ or belonging to a common triangle receive distinct colors. The \emph{injective chromatic index} of $G$, denoted by $\chi'_{inj}(G)$, is the smallest integer $k$ such that $G$ admits an injective edge coloring with $k$ colors. Ferdjallah et al.~\cite{FBSA-2022} proved that if $\Delta \ge 3$, then
	\[\chi_{inj}'(G) \le 2(\Delta - 1)^2.\]
	
	Later, the upper bound for $\Delta = 3$ was improved by Kostochka et al.~\cite{Kostochka}. They proved that $\chi_{inj}'(G) \le 7$ if $\Delta = 3$. However, for $\Delta \ge 4$, although some better bounds have been proven for sparse graphs (see e.g.~\cite{BWZZ,FL,HZ,LP,LP2,Wu,ZZB}), the best general upper bound is still $2(\Delta - 1)^2$.
	
	In this paper, we improve this upper bound by $\Delta-3$.
	\begin{Theorem}\label{thm:main}
		Let $G$ be a graph with maximum degree $\Delta \ge 4$. Then $\chi'_{inj}(G) \le 2(\Delta - 1)^2 - \Delta + 3$.
	\end{Theorem}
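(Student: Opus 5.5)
We argue by contradiction with a minimal counterexample. Let $k = 2(\Delta-1)^2 - \Delta + 3$ and let $G$ be a counterexample with maximum degree at most $\Delta$ and $|E(G)|$ minimum. Since the conflict relation of an injective edge coloring only becomes weaker when edges are deleted, every proper subgraph of $G$ has an injective edge coloring with $k$ colors. We may assume $G$ is connected.

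The key quantity is the injective degree of an edge $e=uv$. Let $C(e)$ be the set of edges that must receive a color different from $e$: edges at distance $2$ from $e$, together with edges sharing a triangle with $e$. We will pick a suitable edge $e$, color $G-e$ by minimality, and extend the coloring to $e$. This is possible whenever $|C(e)| \le k-1$, or more generally whenever the edges of $C(e)$ use at most $k-1$ distinct colors.

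We first bound $|C(e)|$. Write $d(u)=a$ and $d(v)=b$. The edges at distance $2$ are those incident to $N(u)\setminus\{v\}$ or $N(v)\setminus\{u\}$, other than $e$ and the edges at $u$ or $v$ themselves. If $w\in N(u)\cap N(v)$, the edges $uw$ and $vw$ lie in a triangle with $e$ and are counted anyway. The naive count is at most $(a-1)(\Delta-1)+(b-1)(\Delta-1)\le 2(\Delta-1)^2$, which is the Ferdjallah et al. bound. We gain $\Delta-2$ in two ways:
\begin{itemize}
\item Each edge with both ends in $N(u)\cup N(v)\setminus\{u,v\}$ is counted twice.
\item Each common neighbor $w$ merges two stars at $w$ that overlap in the edge $uw$ or $vw$.
\end{itemize}
The saving for $e$ is therefore roughly $(2\Delta-a-b)(\Delta-1)+e_G(N(u)\cup N(v)\setminus\{u,v\})+|N(u)\cap N(v)|$. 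If some edge has $a+b\le 2\Delta-1$, the saving is already at least $\Delta-1\ge\Delta-2$ and we are done. So $G$ is $\Delta$-regular.

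In the regular case, consider $X=N(u)\cup N(v)\setminus\{u,v\}$ for an edge $uv$. If $e_G(X)$ plus the number of triangles on $uv$ is at least $\Delta-2$, the edge $uv$ is reducible. Otherwise the neighborhoods are nearly tree-like. Then we delete a vertex $x$ rather than an edge, or delete the star at a vertex. We color $G-x$ and then color the $\Delta$ edges at $x$ greedily in a chosen order. The first edges face about $2(\Delta-1)^2$ forbidden colors, but their conflicts include one another only when they lie in triangles. So instead we uncolor the edges at $x$ and exploit the following fact: the edges at distance $2$ from $xy_i$ through $y_j$ all share one color class constraint with one another. Two edges $xy_i$ and $xy_j$ at distance $1$ may receive the same color, since injective edge coloring allows incident edges to share a color unless they lie in a triangle.

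So in the nearly triangle-free regular case, all $\Delta$ edges at $x$ can receive one common color. That color must avoid the colors on the edges incident to $N(y_i)\setminus\{x\}$ for all $i$, which could be up to $\Delta(\Delta-1)^2$ colors, far too many. A better option is to recolor just one edge $xy$ while also tracking the colors repeated in $C(xy)$. The edges at $y_j$ (for $j\ne$ the index of $y$) lie at distance $2$ from $xy$ but are mutually incident at $y_j$. Nothing forces them to have distinct colors, yet nothing forces equality either.

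The plan is therefore a counting argument with recoloring, carried out in three steps:
\begin{enumerate}
\item Take $e=xy$ with $|C(e)|$ at least $k$ and all colors in $C(e)$ distinct and used.
\item Try to recolor some edge $f\in C(e)$, for instance an edge $y_jz$ at distance $2$, to a color already used elsewhere in $C(e)$. The edge $f$ has its own forbidden set of size at most $2(\Delta-1)^2$. Since $k$ exceeds $2(\Delta-1)^2-(\Delta-3)$ by only a little, this step needs the overlap between $C(f)$ and $C(e)$ to be large. That overlap is large because $f$ and $e$ are close: both are adjacent to the star at $y_j$.
\item Show that $|C(e)\cap C(f)|\ge \Delta-2$ roughly, which leaves a free color for $f$ among the colors of $C(e)$. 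This frees a color for $e$.
\end{enumerate}

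The main obstacle is the regular, locally sparse case, specifically the precise overlap counting in step 3. The difficulty is ensuring that the recolored $f$ may take a color that appears on some other edge of $C(e)$ without conflicting. Small cases such as $\Delta=4$ may need separate checking.
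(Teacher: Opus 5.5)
Your preliminary counting is right and matches the paper's local lemma. An edge $e=uv$ with at least $k=2(\Delta-1)^2-\Delta+3$ conflicts has $\deg(u)=\deg(v)=\Delta$. It lies in no triangle at all: a single triangle $uvw$ already saves at least $\Delta-2$, since the two stars at $w$ together contribute at most $\deg(w)\le\Delta$ edges, so your estimate of $|N(u)\cap N(v)|$ undercounts. And it has at most $\Delta-3$ edges inside $(N(u)\cup N(v))\setminus\{u,v\}$.

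However, the framework you feed this into is not valid. Coloring $G-e$ by minimality and then coloring $e$ can fail even when $|C(e)|\le k-1$, because $e$ is itself a connector. For $x\in N(u)\setminus\{v\}$ and $y\in N(v)\setminus\{u\}$ with $x\ne y$, the edges $ux$ and $vy$ are at distance $2$ in $G$ but possibly not in $G-e$. Likewise, for a common neighbour $w$, the edges $uw,vw$ share a triangle only in $G$. A coloring of $G-e$ may give such a pair the same color, and no choice of color for $e$ repairs this. Already for the path $xuvy$, where $C(uv)=\emptyset$, coloring $xu$ and $vy$ alike cannot be extended. So your conclusions that every edge of a minimal counterexample has $|C(e)|\ge k$, and hence that $G$ is $\Delta$-regular, are unfounded. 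The correct setting is the conflict graph $H$ itself. Take a $(k+1)$-vertex-critical induced subgraph $H'\subseteq H$; deleting a vertex of $H$ keeps all other conflicts intact. Then every $e\in V(H')$ has $\deg_H(e)\ge k$, and your local conditions hold for those edges, though not for all of $G$.

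The real gap is the core case, which you leave open, and the recoloring plan there cannot succeed. In the critical situation every $f\in C(e)$ also has $|C(f)|\ge k$. To recolor $f$ away from its color $\alpha$, you need a color outside $\{\alpha\}$ and outside the colors on $C(f)\setminus\{e\}$. If the colors on $C(f)$ are pairwise distinct, this blocked set has size $|C(f)|\ge k$, so no color is free. The overlap $C(e)\cap C(f)$ does not help: by your step 1 its edges carry pairwise distinct colors, so it produces no repetition. Nothing in your setup forces repeated colors on $C(f)$.

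Beating $\Delta(H)+1$ by $\Delta-2$ colors needs a global Brooks-type theorem, not a single recoloring. The paper uses the local structure to bound cliques among high-degree vertices of $H$.
\begin{itemize}
\item A clique in $H_{\ge D-\Delta+3}$ consists of pairwise disjoint edges, each needing its own connector at a fixed member, so $\omega\le 2\Delta-1$.
\item For $H_{\ge D-\Delta+4}$, counting ``good'' indices in triples of a hypothetical $(2\Delta-2)$-clique forces $\sum_i e_G(A_i)\ge (2\Delta-2)(\Delta-2)(\Delta-3)/3$. This contradicts $e_G(A_i)\le\Delta-4$, so $\omega\le 2\Delta-3$.
\end{itemize}
It then applies Rabern's strengthening of the Borodin--Kostochka theorem with $p=\Delta-3$ to $H'$. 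This forces $H'=K_{k+1}$ or $H'=O_5$, and both are impossible.
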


	In order to prove our main theorem, for every graph $G$, we will construct an auxiliary graph $H$ such that $\chi(H)=\chi'_{inj}(G)$.
	
	For a graph $G$ and two edges $e$ and $e'$ in $G$, we say that $e$ and $e'$ \emph{conflict} if 
	\begin{itemize}
		\item $e$ and $e'$ lie in a common triangle, or
		\item $e$ and $e'$ are at distance $2$.
	\end{itemize}
	
	Under an injective edge coloring of $G$, any two edges that conflict receive distinct colors. We define the \emph{conflict graph} $H$ of $G$ by setting $V(H) = E(G)$, with two vertices $e,e'\in V(H)$ adjacent if and only if $e$ and $e'$ conflict in $G$. By definition, $\chi(H)=\chi'_{inj}(G)$.

	\section{Proof of Theorem 1.1}
	Throughout this section, let $G$ be a graph with maximum degree $\Delta \ge 4$, and let $H$ be its conflict graph.
	\begin{Lemma}\label{lem:max-degree}
		Let $G$ be a graph with maximum degree $\Delta \ge 4$. Then $\Delta(H) \le 2(\Delta - 1)^2$.
	\end{Lemma}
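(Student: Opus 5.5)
Fix an edge $e=uv$ of $G$; the aim is to show that $e$ has at most $2(\Delta-1)^2$ neighbours in $H$. The plan is to group every edge conflicting with $e$ according to the edge of $G$ incident to $e$ through which it is reached, and then count these "paths" edge by edge.

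\textbf{Step 1: the conflicting edges.} Let $e'$ conflict with $e$.
\begin{itemize}
\item If $e'$ is at distance $2$ from $e$, then $e'$ is not incident to $e$, and some edge $f$ is incident to both. So $f=ux$ with $x\ne v$ and $e'=xy$ with $y\ne u$ (or the symmetric situation at $v$). Here $y\neq v$ as well, since otherwise $e'$ would be incident to $e$.
\item If $e'$ lies in a common triangle with $e$, then $e'$ is incident to $e$. Indeed, $e'=ux$ or $vx$, where $uvx$ is a triangle.
\end{itemize}

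\textbf{Step 2: assigning each conflicting edge to a triangle or a path.}
\begin{itemize}
\item A triangle edge $ux$ with $x\in N(u)\cap N(v)$ is attributed to the edge $ux$ itself. We note that then $vx$ is also a triangle edge, attributed to itself.
\item A distance-$2$ edge is attributed to an edge $f=ux$, $x\in N(u)\setminus\{v\}$, through which it is reached. If there are several choices of $f$, we pick one; this only overcounts.
\end{itemize}

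\textbf{Step 3: the count at one incident edge.} Fix $x\in N(u)\setminus\{v\}$; there are at most $\Delta-1$ such $x$. The edges attributed to $f=ux$ are the following.
\begin{itemize}
\item Distance-$2$ edges $xy$ with $y\notin\{u,v\}$. There are at most $\deg(x)-1\le\Delta-1$ of these, and at most $\Delta-2$ if $x\in N(v)$, since then $y=v$ is excluded.
\item The edge $ux$ itself, but only when $x\in N(v)$.
\end{itemize}
So each such $x$ contributes at most $\Delta-1$ in either case. This is the key observation: the triangle edge $ux$ replaces the lost edge $xv$.

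\textbf{Step 4: summing.} The contribution from the $u$ side is at most $(\Delta-1)^2$, and symmetrically at most $(\Delta-1)^2$ from the $v$ side. Hence
\[\deg_H(e)\le 2(\Delta-1)^2.\]

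\textbf{Main obstacle.} The delicate point is that edges lying in triangles through $e$ must not be double counted or left uncounted. Here each such edge is charged once, at its own apex $x$, which works because the edge $xv$ is correspondingly not a distance-$2$ edge. A distance-$2$ edge reached from both the $u$ side and the $v$ side is simply counted at most twice, which is harmless for an upper bound. The hypothesis $\Delta\ge4$ is not really needed here.
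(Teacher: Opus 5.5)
Your proof is correct and is essentially the paper's argument: both group the edges conflicting with $e=uv$ by the neighbor $x$ of $u$ (resp.\ $v$) through which they are reached, and bound each group by $\deg_G(x)-1\le\Delta-1$. The only difference is bookkeeping. The paper places a triangle edge $ua$ in $E_v(a)$, the set of edges at $a$ other than $va$ on the $v$-side, whereas you place it in the $u$-side group at $a$ in place of the excluded edge $av$; both choices give the same count.
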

	\begin{proof}
		Let $e=uv\in E(G)$. For each $x\in N_G(u)\setminus\{v\}$, let
		\[
		E_u(x)=\{f\in E(G): x\in f,\ f\neq ux\},
		\]
		and for each $y\in N_G(v)\setminus\{u\}$, let
		\[
		E_v(y)=\{f\in E(G): y\in f,\ f\neq vy\}.
		\]
		We claim that every edge conflicting with $e$ belongs to
		\[
		\Biggl(\bigcup_{x\in N_G(u)\setminus\{v\}}E_u(x)\Biggr)
		\;\bigcup\;
		\Biggl(\bigcup_{y\in N_G(v)\setminus\{u\}}E_v(y)\Biggr).
		\]
		
		Let $f$ be an edge conflicting with $e$. If $f$ is disjoint from $e$, then $e$ and $f$ are at distance $2$. Hence there is an edge joining an endpoint of $e$ to an endpoint of $f$. Thus some endpoint of $f$ lies in $N_G(u)\setminus\{v\}$ or in $N_G(v)\setminus\{u\}$, and consequently $f$ belongs to $\bigcup_{x\in N_G(u)\setminus\{v\}}E_u(x)$ or $\bigcup_{y\in N_G(v)\setminus\{u\}}E_v(y)$.
		
		If $f$ is incident with $e$, say $f=ua$, then $f$ can conflict with $e=uv$ only if $uav$ is a triangle. Hence $a\in N_G(v)\setminus\{u\}$, and $f\in E_v(a)$. The case in which $f$ is incident with $v$ is symmetric.
		
		Therefore,
		\begin{equation*}
			\begin{split}
				\deg_H(e)
				&\le
				\sum_{x\in N_G(u)\setminus\{v\}}(\deg_G(x)-1)
				+
				\sum_{y\in N_G(v)\setminus\{u\}}(\deg_G(y)-1) \\
				&\le
				2(\Delta - 1)^2.
			\end{split}
		\end{equation*}
		As $e$ was arbitrary, we have $\Delta(H)\le 2(\Delta - 1)^2$. This completes the proof.
	\end{proof}
	
	\begin{Lemma}\label{lem:local-structure}
		Let $G$ be a graph with maximum degree $\Delta$, let $e = uv \in E(G)$ satisfy $\deg_H(e) \ge 2(\Delta - 1)^2 - \Delta + 3$, and let
		\[A_e = (N_G(u)\setminus \{v\}) \cup (N_G(v)\setminus \{u\}).\]
		Then the following hold:
		\begin{enumerate}
			\item[(1)] $\deg_G(u) = $ $\deg_G(v) = \Delta$;
			\item[(2)] $e$ is not contained in a triangle in $G$;
			\item[(3)] $e_G(A_e) \le \Delta - 3$.
		\end{enumerate}
		Moreover, if $\deg_H(e) \ge 2(\Delta - 1)^2 - \Delta + 4$, then $e_G(A_e) \le \Delta - 4$.
	\end{Lemma}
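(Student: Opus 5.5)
The plan is to sharpen the counting in the proof of Lemma~\ref{lem:max-degree}. There we showed that every edge conflicting with $e$ lies in the union $U$ of the sets $E_u(x)$ for $x\in N_G(u)\setminus\{v\}$ and $E_v(y)$ for $y\in N_G(v)\setminus\{u\}$. So $\deg_H(e)\le |U|$. We then bounded $|U|$ by the sum $S$ of the sizes of these sets, and $S\le 2(\Delta-1)^2$. I will write
\[
\deg_H(e)\le |U| = S - (\text{overcount}) \le 2(\Delta-1)^2 - (\text{deficit}) - (\text{overcount}),
\]
where the \emph{deficit} is $2(\Delta-1)^2-S$ and the \emph{overcount} is $S-|U|$. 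Each of (1)--(3) will follow by exhibiting enough deficit or overcount.

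For (1), suppose $\deg_G(u)\le \Delta-1$. Then the first sum has at most $\Delta-2$ terms, each at most $\Delta-1$. So the deficit is at least $\Delta-1$, giving $\deg_H(e)\le 2(\Delta-1)^2-\Delta+1$, a contradiction. The case of $v$ is symmetric.

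For (2), suppose $w$ is a common neighbour of $u$ and $v$. Then $w$ contributes two terms, $|E_u(w)|=\deg_G(w)-1$ and $|E_v(w)|=\deg_G(w)-1$. However, $E_u(w)\cup E_v(w)$ is just the set of edges at $w$, which has $\deg_G(w)$ elements. The two terms are budgeted $2(\Delta-1)$ in total, so deficit plus overcount attributable to $w$ is at least $2(\Delta-1)-\deg_G(w)\ge \Delta-2$. This yields $\deg_H(e)\le 2(\Delta-1)^2-\Delta+2$, again a contradiction. I must check that the remaining terms contribute no positive excess, which is clear since each is bounded by $\Delta-1$ and the union only shrinks.

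For (3), by (1) and (2) the sets $N_G(u)\setminus\{v\}$ and $N_G(v)\setminus\{u\}$ are disjoint, and neither $u$ nor $v$ lies in $A_e$. Every edge $xy$ with $x,y\in A_e$ lies in the set indexed by $x$ and in the set indexed by $y$. These are two distinct index terms, so $xy$ is counted at least twice in $S$. Hence the overcount is at least $e_G(A_e)$, and
\[
2(\Delta-1)^2-\Delta+3\le \deg_H(e)\le 2(\Delta-1)^2-e_G(A_e),
\]
which gives $e_G(A_e)\le \Delta-3$. Under the stronger hypothesis the same inequality gives $e_G(A_e)\le\Delta-4$; (1) and (2) still apply since the hypothesis is stronger. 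The only delicate point, and the one I expect to need the most care, is the bookkeeping in (2). There one must make sure the edges $uw$ and $vw$ are handled correctly: they belong to $E_v(w)$ and $E_u(w)$ respectively. The loss must be measured against the full budget $2(\Delta-1)$ rather than only the double counting, since $\deg_G(w)$ may be small.
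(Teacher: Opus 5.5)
Your proposal is correct and follows essentially the same route as the paper: you refine the union bound from Lemma~\ref{lem:max-degree}. You lose $\Delta-1$ from a missing neighbour in (1), bound $|E_u(w)\cup E_v(w)|\le \deg_G(w)\le\Delta$ for a common neighbour $w$ in (2), and subtract the double count of edges inside $A_e$ in (3). The only cosmetic difference is in (3): the paper writes an exact identity $\deg_H(e)=\sum_{a\in A_e}(\deg_G(a)-1)-e_G(A_e)$ using (1) and (2), whereas your overcount inequality is all that is needed and in fact does not require (1) and (2).
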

	\begin{proof}
		Suppose to the contrary that $\deg_G(u) \le \Delta - 1$ or $\deg_G(v) \le \Delta - 1$. Then by the counting argument in Lemma \ref{lem:max-degree}, we have $\deg_H(e) \le (\Delta - 2)(\Delta - 1) + (\Delta - 1)^2 = 2(\Delta - 1)^2 - \Delta + 1$, a contradiction. Thus (1) holds.
		
		Suppose to the contrary that $e=uv$ lies in a triangle $uvw$. Following the proof of Lemma \ref{lem:max-degree}, we first consider $E_u(w) \cup E_v(w)$. Since $w$ is adjacent to both $u$ and $v$, it follows that $|E_u(w) \cup E_v(w)| \le \deg_G(w) \le \Delta$. So there are at most $\Delta - 2$ neighbors other than $w$ at each of $u$ and $v$, and each neighbor contributes at most $\Delta - 1$ conflicting edges with $e$. Hence
		\begin{equation*}
			\begin{split}
				\deg_H(e) 
				&\le 
				\deg_G(w) + \sum_{x\in N_G(u)\setminus\{v,w\}}(\deg_G(x)-1) + \sum_{y\in N_G(v)\setminus\{u,w\}}(\deg_G(y)-1) \\
				&\le
				\Delta + 2(\Delta-2)(\Delta - 1)\\
				&= 2(\Delta - 1)^2 - \Delta + 2, 
			\end{split}			
		\end{equation*}
		which contradicts the assumption that $\deg_H(e) \ge 2(\Delta - 1)^2 - \Delta + 3$. Thus $e$ is not contained in a triangle and (2) holds.
		
		By (1) and (2), it holds that $|A_e| = 2\Delta - 2$, and each vertex $a \in A_e$ has exactly one neighbor in $\{u,v\}$. Then the unique edge joining $a$ to $\{u,v\}$ does not conflict with $e$, while all other edges incident with $a$ conflict with $e$. Thus the sum $\sum_{a \in A_e}(\deg_G(a) - 1)$ counts all edges in $G$ conflicting with $e$. As every such edge with both endpoints in $A_e$ is counted twice,  we have 
		\[\deg_H(e) = \sum_{a \in A_e}(\deg_G(a) - 1) - e_G(A_e) \le 2(\Delta - 1)^2 - e_G(A_e).\]
		Since $\deg_H(e) \ge 2(\Delta - 1)^2 - \Delta + 3$, $e_G(A_e) \le \Delta - 3$. This proves (3).
        
		Similarly, we can prove that if $\deg_H(e) \ge 2(\Delta - 1)^2 - \Delta + 4$, then $e_G(A_e) \le \Delta - 4$.
	\end{proof}
	
	In the following, we let $D = 2(\Delta - 1)^2$. For an integer $t$, let 
	\[H_{\ge t} = H[\{v \in V(H): \deg_H(v) \ge t\}]\]
	be the subgraph of $H$ induced by the vertices with degree at least $t$. Note that, by the definition of $H$, a vertex in $H$ with degree at least $t$ corresponds to an edge in $G$ conflicting with at least $t$ other edges.
	
	\begin{Lemma}\label{lem:first-clique}
		If $\Delta(G) = \Delta$, then $\omega(H_{\ge D - \Delta + 3}) \le 2\Delta - 1$.
	\end{Lemma}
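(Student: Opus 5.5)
The plan is to show that a clique in $H_{\ge D-\Delta+3}$ corresponds to a matching in $G$ in which every edge other than a fixed one $e$ has an endpoint in $A_e$, and then to count. Let $K$ be a clique in $H_{\ge D - \Delta + 3}$. It is a set of edges of $G$ that pairwise conflict, each with $\deg_H \ge 2(\Delta-1)^2 - \Delta + 3$. By Lemma~\ref{lem:local-structure}, every $e \in K$ has both endpoints of degree $\Delta$ and lies in no triangle of $G$. Only (1) and (2) of that lemma are needed here; part (3) is not used.

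The first step is to show that $K$ is a matching in $G$. Suppose two distinct edges $e, f \in K$ share a vertex. Incident edges are at distance $1$, so they can conflict only by lying in a common triangle. This contradicts Lemma~\ref{lem:local-structure}(2) applied to $e$. Hence the edges of $K$ are pairwise disjoint. Since any two of them conflict, any two are at distance exactly $2$, so some edge of $G$ joins an endpoint of one to an endpoint of the other.

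Next, fix $e = uv \in K$. By Lemma~\ref{lem:local-structure}(1), $|N_G(u)\setminus\{v\}| = |N_G(v)\setminus\{u\}| = \Delta - 1$. By (2), $u$ and $v$ have no common neighbour, so $|A_e| = 2\Delta - 2$. Take any $f \in K\setminus\{e\}$. It is disjoint from $e$ and at distance $2$ from it, so some endpoint of $f$ is adjacent to $u$ or $v$, and that endpoint is not $u$ or $v$. Hence $f$ has an endpoint in $A_e$. Choose one such endpoint $\phi(f) \in A_e$ for each $f$.

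The final step is the count. The edges of $K\setminus\{e\}$ are pairwise vertex-disjoint, so the map $\phi$ is injective. Therefore $|K| - 1 \le |A_e| = 2\Delta - 2$, which gives $|K| \le 2\Delta - 1$. I do not expect a real obstacle here. The only point needing care is the observation that adjacency in $H$ between incident edges forces a triangle; this is exactly what Lemma~\ref{lem:local-structure}(2) excludes, and it is what turns the clique into a matching and makes the injection work.
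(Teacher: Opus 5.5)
Your proof is correct and is essentially the paper's argument: both use Lemma~\ref{lem:local-structure}(2) to show the clique is a matching, then inject $K\setminus\{e\}$ into a set of size at most $2\Delta-2$ attached to $e$. You map each $f$ to one of its endpoints in $A_e$, while the paper maps it to a connector edge at $u$ or $v$, which gives the same count (and part~(1) is not needed, since $|A_e|\le 2\Delta-2$ follows directly from the maximum degree).
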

	\begin{proof}
		Let $Q$ be a clique in $H_{\ge D - \Delta + 3}$, and let $E'\subseteq E(G)$ be the set of edges in $G$ corresponding to the vertices in $Q$, so any two edges in $E'$ conflict. By Lemma~\ref{lem:local-structure}(2), no edge of $E'$ lies in a triangle. Since two adjacent edges can conflict only when they lie in a common triangle, the edges of $E'$ are pairwise disjoint.
		
		Fixing an edge $e = uv \in E'$, every edge $e' \in E' \setminus \{e\}$ conflicts with $e$ and hence there is an edge joining an endpoint of $e$ to an endpoint of $e'$. We call such an edge a \emph{connector} between $e$ and $e'$. Choose one such connector for each edge of $E' \setminus \{e\}$. Note that these connectors are distinct, because the edges of $E'$ are pairwise disjoint. Since there are at most $2\Delta - 2$ edges incident with $u$ or $v$ other than $e$, it follows that $|Q| - 1 \le 2\Delta - 2$. Hence $|Q| \le 2\Delta - 1$. This completes the proof.
	\end{proof}
	
	\begin{Lemma}\label{lem:second-clique}
		If $\Delta(G) = \Delta$, then $\omega(H_{\ge D - \Delta + 4}) \le 2\Delta - 3$.
	\end{Lemma}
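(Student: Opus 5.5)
The plan is to run the clique argument of Lemma~\ref{lem:first-clique} again, now feeding in the stronger conclusion $e_G(A_e)\le \Delta-4$ from Lemma~\ref{lem:local-structure}. Let $Q$ be a clique in $H_{\ge D-\Delta+4}$ with edge set $E'\subseteq E(G)$, and suppose for contradiction that $s:=|Q|\ge 2\Delta-2$; by Lemma~\ref{lem:first-clique} we may assume $s\in\{2\Delta-2,2\Delta-1\}$. Exactly as in Lemma~\ref{lem:first-clique}, the edges of $E'$ lie in no triangle, are pairwise disjoint, and pairwise conflicting, so each pair $e_1,e_2\in E'$ is joined by at least one \emph{connector}. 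By Lemma~\ref{lem:local-structure}(1), every endpoint of every edge of $E'$ has degree $\Delta$, so it has exactly $\Delta-1$ \emph{spare} edges besides its own edge of $E'$.

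The first step is a tightness count. Fix $e=uv\in E'$. Its $2\Delta-2$ spare edges must contain a connector to each of the $s-1\ge 2\Delta-3$ other edges of $E'$. Hence all but at most one spare edge of $e$ is a connector, and these go to distinct partners. Globally, let $c(e_1,e_2)$ be the number of connectors of a pair. Double counting spare edges over all of $E'$ gives
\[
\sum_{\{e_1,e_2\}}\bigl(c(e_1,e_2)-1\bigr)\le \tfrac12\bigl(s(2\Delta-2)-s(s-1)\bigr)\le s/2 .
\]
So almost every pair has exactly one connector, and $A_e$ is essentially a transversal of $E'\setminus\{e\}$: it consists of one ``near'' endpoint of each other edge, plus at most one extra vertex.

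The second step, which I expect to be the main obstacle, is to find some $e\in E'$ with $e_G(A_e)\ge \Delta-3$. This contradicts Lemma~\ref{lem:local-structure}. I would count edges inside $A_e$ from two sources. The first source is edges of $E'$ with both endpoints in $A_e$. The second is connectors between $e_1,e_2\in E'\setminus\{e\}$ whose two ends are the near endpoints of $e_1,e_2$ with respect to $e$. I would then sum $e_G(A_e)$ over all $e\in E'$ and show the total is at least $s(\Delta-3)$ by averaging.

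The averaging would go as follows. Each endpoint $x$ of an edge $f\in E'$ is the near endpoint of $f$ with respect to roughly $\Delta-1$ edges of $E'$, namely those reached by its connectors. A connector $xy$ between $f_1$ and $f_2$ lies inside $A_e$ precisely when $e$ is reached by connectors from both $x$ and $y$. The neighbourhoods of $x$ and $y$ each meet about $\Delta-1$ of the roughly $2\Delta$ edges of $E'$. So I would first try to show by pigeonhole, using the near-bijectivity from the tightness count, that $N(x)$ and $N(y)$ must overlap on $\Omega(\Delta)$ edges of $E'$. Summing this over the $\approx (\Delta-1)(2\Delta-3)$ connectors would give the required $\sum_{e}e_G(A_e)$.

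If this overlap estimate is too weak, the fallback is a more careful case analysis. I would split on whether $s=2\Delta-1$, in which case everything is exactly tight and the structure is rigid, or $s=2\Delta-2$, in which case each $e$ has exactly one slack spare edge. I would then exploit that rigidity to locate the $\Delta-3$ edges inside some $A_e$ directly.
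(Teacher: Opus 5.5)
Your setup matches the paper's: pass to a clique of size $2\Delta-2$, note that its edges are pairwise disjoint and pairwise joined by connectors, and contradict $e_G(A_e)\le\Delta-4$ by lower-bounding $\sum_{e\in E'}e_G(A_e)$. You do this by counting connectors $xy$ whose ends are both adjacent to a third edge. Your Step 1 is correct, though the paper does not need it.

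\textbf{The gap.} The lower bound on $\sum_e e_G(A_e)$ is the whole content of the lemma, and you do not supply it. The mechanism you propose also fails. Take a single connector $xy$ between $f_1$ and $f_2$. The edges of $E'$ reached from $x$ (other than $f_2$) and those reached from $y$ (other than $f_1$) form two sets of size at most $\Delta-2$. They lie in a pool of $s-2\ge 2\Delta-4$ edges, and nothing in your tightness count stops them from being disjoint. So there is no per-connector pigeonhole bound: the overlap is of order $\Delta$ only on average. Your fallback case analysis is not specified.

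\textbf{The missing idea.} You need a global convexity count over triples.
\begin{enumerate}
\item Fix one connector $c_{ij}$ per pair. For each $i$, the other $2\Delta-3$ indices split as $r_i+s_i$ according to which endpoint of $e_i$ carries $c_{ij}$. Hence at least $\binom{r_i}{2}+\binom{s_i}{2}\ge(\Delta-2)^2$ pairs $\{j,k\}$ sit on the same side.
\item Call $i$ good in $T=\{i,j,k\}$ if $c_{ij}$ and $c_{ik}$ share an endpoint of $e_i$, and let $t(T)$ be the number of good indices in $T$. Then $\sum_T t(T)\ge(2\Delta-2)(\Delta-2)^2$.
\item If $j$ and $k$ are both good in $T$, then $c_{jk}\in E(G[A_i])$; this is exactly your ``reached from both $x$ and $y$'' condition. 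Connectors of distinct pairs are distinct edges because $E'$ is a matching, so no edge is counted twice.
\end{enumerate}
Combining these,
\[ \sum_i e_G(A_i)\ge\sum_T\binom{t(T)}{2}\ge\sum_T\bigl(t(T)-1\bigr)\ge(2\Delta-2)(\Delta-2)^2-\binom{2\Delta-2}{3}=\frac{(2\Delta-2)(\Delta-2)(\Delta-3)}{3}. \]
This exceeds $(2\Delta-2)(\Delta-4)$, since $(\Delta-4)-\frac{(\Delta-2)(\Delta-3)}{3}=-\frac{(\Delta-4)^2+2}{3}<0$.

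\textbf{Your target is too strong.} You aim for $\sum_e e_G(A_e)\ge s(\Delta-3)$, but you only need the sum to exceed $s(\Delta-4)$. At $\Delta=4$ the count above gives only $4<6=s(\Delta-3)$, so your target would make that case harder than necessary.
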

	\begin{proof}
		Suppose to the contrary that $H_{\ge D - \Delta + 4}$ contains a clique $Q$ with $|Q| = 2\Delta - 2$. Let $E' = \{e_1,e_2,\ldots,e_{2\Delta-2}\} \subseteq E(G)$ be the set of edges in $G$ corresponding to the vertices in $Q$, so any $e_i$ and $e_j$ with $i,j\in [2\Delta-2]$ and $i\neq j$ conflict. By Lemma~\ref{lem:local-structure}(2), none of $e_1,e_2,\ldots,e_{2\Delta-2}$ is contained in a triangle. So these $2\Delta-2$ edges are pairwise disjoint.
		
		Let $e_i = a_ib_i$ for each $i \in [2\Delta-2]$. Since $e_i$ and $e_j$ are disjoint and conflict for any $i \neq j$, there is a connector between $e_i$ and $e_j$. For each unordered pair $\{i,j\}\subseteq[2\Delta-2]$, fix a connector $c_{ij}=c_{ji}$ between $e_i$ and $e_j$. For each $i \in [2\Delta-2]$, let 
		\[A_i = (N_G(a_i)\setminus \{b_i\}) \cup (N_G(b_i)\setminus \{a_i\}).\]
		Then by Lemma~\ref{lem:local-structure}, we have
		\begin{align}\label{Delta-4}
			e_G(A_i) \le \Delta - 4,\qquad \mbox{for each }i \in [2\Delta-2]. 
		\end{align}
		Now consider a triple $T  = \{i,j,k\} \subseteq [2\Delta-2]$. We say that $i$ is good in $T=\{i,j,k\}$ if $c_{ij}$ and $c_{ik}$ are incident with the same endpoint of $e_i$. Let $t(T)$ be the number of good indices in $T$. Fixing $i$, let $r_i$ and $s_i$ be the numbers of edges in $\{c_{ij}:j\ne i\}$ incident with $a_i$ and $b_i$, respectively. Then $r_i + s_i = (2\Delta - 2) - 1 = 2\Delta - 3$. So the number of triples in which $i$ is good is 
		\[{r_i \choose 2} + {s_i \choose 2} \ge {\Delta - 2 \choose 2} + {\Delta - 1 \choose 2 } = (\Delta - 2)^2,\]
		because the left-hand side is minimized when $r_i$ and $s_i$ differ by at most one. Summing over all $2\Delta - 2$ choices of $i$, we obtain
		\begin{align}\label{108}
			\sum_{T} t(T) \ge (2\Delta-2)(\Delta-2)^2.
		\end{align}
		
		Now we relate this to the edges of $G[A_i]$. Consider a triple $T = \{i,j,k\}$. Assume that both $j$ and $k$ are good in $T$. Since $j$ is good, $c_{jk}$ and $c_{ij}$ are incident with the same endpoint of $e_j$. This endpoint belongs to $A_i$. Similarly, since $k$ is good, the endpoint of $e_k$ incident with $c_{jk}$ also belongs to $A_i$. Hence $c_{jk}\in E(G[A_i])$.
		
		Thus, $T$ contributes at least ${t(T) \choose 2}$ to the sum $\sum_{i=1}^{2\Delta - 2} e_G(A_i)$. Therefore,
		\[\sum_{i=1}^{2\Delta - 2} e_G(A_i) \ge \sum_T {t(T) \choose 2}. \]
		Since $0 \le t(T) \le 3$, we have
		\begin{align}\label{t(T)}
			{t(T) \choose 2} \ge t(T) - 1.
		\end{align}
		There are ${2\Delta - 2 \choose 3} $ choices of the triple $T$, so by\eqref{108} and \eqref{t(T)}, we have
		\begin{equation*}
			\begin{split}
				\sum_{i=1}^{2\Delta - 2} e_G(A_i) &\ge \sum_T(t(T) - 1) \\
					&\ge (2\Delta - 2)(\Delta-2)^2 - {2\Delta - 2 \choose 3} \\
					&= \frac{(2\Delta-2)(\Delta-2)(\Delta-3)}{3}.
			\end{split}
		\end{equation*}
		By \eqref{Delta-4}, $\sum_{i=1}^{2\Delta - 2} e_G(A_i) \le (2\Delta - 2)(\Delta - 4)$. This is impossible, because 
		\[ (\Delta - 4) - \frac{(\Delta-2)(\Delta-3)}{3}  = -\frac{(\Delta-4)^2 + 2}{3} < 0,\]
		which implies that $(2\Delta - 2)(\Delta - 4) < \frac{(2\Delta-2)(\Delta-2)(\Delta-3)}{3}$, a contradiction. Therefore, $\omega(H_{\ge D - \Delta + 4}) \le 2\Delta - 3$, and the proof is complete.
	\end{proof}

	A \emph{vertex critical graph} is a graph whose chromatic number decreases upon the removal of any vertex. If a graph $G$ satisfies $\chi(G) = k$ and $\chi(G - v) = k - 1$ for every vertex $v \in V(G)$, then $G$ is called \emph{$k$-vertex critical}. Note that a $k$-vertex critical graph $G$ satisfies $\delta(G) \ge k - 1$. For a vertex critical graph $G$, we call a vertex $v \in V(G)$ \emph{low} if $\deg_G(v) = \chi(G) - 1$, and \emph{high} if $\deg_G(v) \ge \chi(G)$. Let $\mathcal{H}(G)$ denote the subgraph of $G$ induced by its high vertices.
	
	To prove Theorem \ref{thm:main}, we also need to apply a result due to Rabern~\cite{RABERN-2013}.
	\begin{Lemma}[Rabern~\cite{RABERN-2013}]\label{lem:Rabern}
		Let $G$ be a vertex-critical graph with $\chi(G) \ge \Delta(G) + 1 - p \ge 4$ for some $p \in \mathbb{N}$. If $\omega (\mathcal{H}(G)) \le \frac{\chi(G)+1}{p+1} - 2$, then $G = K_{\chi(G)}$ or $G = O_5$, where $O_5$ is the graph shown in Figure~\ref{O5}.
		\begin{figure}[H]
			\centering
			\input{O5}
		\end{figure}
	\end{Lemma}
	
	Now we are ready to prove Theorem~\ref{thm:main}.
	\begin{proof}[Proof of Theorem~\ref{thm:main}]
		Let $G$ be a graph with maximum degree $\Delta \ge 4$, and let $H$ be its conflict graph. Let $D = 2(\Delta - 1)^2$. Suppose to the contrary that $\chi(H) \ge D - \Delta + 4$. Choose an induced subgraph $H'$ of $H$ with chromatic number at least $D - \Delta + 4$, and subject to this, $|H'|$ is minimum. By the minimality of $H'$ and since removing a vertex decreases the chromatic number by at most $1$, $H'$ is $(D - \Delta + 4)$-vertex critical, and hence $\chi(H') = D - \Delta + 4$. By Lemma~\ref{lem:max-degree}, we have
		\[ D - \Delta + 3 \le \delta(H') \le \Delta(H') \le D.\]
		So every vertex of  $H'$ has degree at least $D - \Delta + 3$, and therefore $H'$ is a subgraph of $H_{\ge D - \Delta +3}$. Then by Lemma~\ref{lem:first-clique} and $\Delta \ge 4$, $\omega(H') \le \omega(H_{\ge D - \Delta +3})  \le 2\Delta - 1 < D - \Delta + 4$.
        
		Also, by definition, each high vertex of $H'$ has degree at least $D -\Delta + 4$ in $H$. Then by Lemma~\ref{lem:second-clique}, $\omega(\mathcal{H}(H')) \le \omega(H_{\ge D - \Delta + 4}) \le 2\Delta - 3$. 
		
		Let $p = \Delta - 3 \ge 1$. Then 
		\[\chi(H') = D - \Delta + 4 \ge  \Delta(H') + 1 - p \ge D - \Delta + 3 +1 - p = 2\Delta^2 - 6\Delta + 9 \ge 17 > 4. \]
		Moreover,
		\[\frac{\chi(H') + 1}{p + 1} - 2 = 2\Delta - 3 + \frac{5}{\Delta-2} > 2\Delta - 3 \ge \omega(\mathcal{H}(H')).\]
		So we can apply Lemma~\ref{lem:Rabern} to conclude that $H' = K_{D - \Delta +4}$ or $H' = O_5$. If $H' = K_{D - \Delta +4}$, it contradicts that $\omega(H') < D - \Delta + 4$; if $H' = O_5$, then $\chi(O_5) = 5$, contradicting $\chi(H') = D - \Delta + 4 \ge 17$. Thus, $\chi(H) \le D - \Delta + 3$, and therefore $\chi'_{inj}(G) = \chi(H) \le 2(\Delta - 1)^2 - \Delta + 3$. This completes the proof.
	\end{proof}

	\section*{Declaration of generative AI and AI-assisted technologies in the manuscript preparation process}
    
    During the preparation of this work, the authors used OpenAI's GPT-5.6 Sol in order to study the special cases $\Delta = 4$ and $\Delta = 5$. After using this tool/service, the authors reviewed and edited the content as needed and take full responsibility for the content of the published article.

\end{document}

%% file: O5.tex
\begin{tikzpicture}[
    baseline=-0.6ex,
    scale=0.55,
    every node/.style={
        circle,
        fill=black,
        inner sep=1.3pt
    },
    every edge/.style={line width=0.55pt}
]

% K_5 - xy
\node (a) at (-2.15, 0.95) {};
\node (b) at (-2.85, 0) {};
\node (c) at (-2.15,-0.95) {};

\node (x) at (-1.15, 0.48) {};
\node (y) at (-1.15,-0.48) {};

\draw (a)--(b)--(c)--(a);

\draw (x)--(a);
\draw (x)--(b);
\draw (x)--(c);

\draw (y)--(a);
\draw (y)--(b);
\draw (y)--(c);

% K_4
\node (p) at (-0.15, 0.95) {};
\node (q) at (0.75, 0.48) {};
\node (r) at (0.75,-0.48) {};
\node (s) at (-0.15,-0.95) {};

\draw (p)--(q);
\draw (p)--(r);
\draw (p)--(s);
\draw (q)--(r);
\draw (q)--(s);
\draw (r)--(s);

% Joining edges
\draw (x)--(p);
\draw (x)--(q);
\draw (y)--(r);
\draw (y)--(s);

\end{tikzpicture}
\caption{The graph $O_5$.}
\label{O5}